\newcommand{\ie}{i.e. }
\newcommand{\Id}{\mathrm{Id}}
\newcommand{\F}{\mathcal{F}}
\newcommand{\X}{\mathfrak X}
\newcommand{\M}{(M,\allowbreak{}\f,\allowbreak{}\xi,\allowbreak{}\eta,\allowbreak{}g)}
\newcommand{\LL}{\mathcal{L}}
\newcommand{\R}{\mathbb{R}}
\newcommand{\Z}{\mathbb{Z}}
\newcommand{\n}{\nabla}
\newcommand{\f}{\varphi}
\newcommand{\D}{{\rm d}}
\newcommand{\al}{\alpha}
\newcommand{\lm}{\lambda}
\newcommand{\ta}{\theta}
\newcommand{\om}{\omega}
\newcommand{\norm}[1]{\Vert#1\Vert ^2}
\newcommand{\sgn}{\mathrm{sgn}}
\numberwithin{equation}{section}
\newtheorem{thm}{Theorem}[section]
\begin{document}
\title[Hyperspheres in real pseudo-Riemannian 4-spaces \ldots ]
{Space-like and time-like hyperspheres \\
in real pseudo-Riemannian 4-spaces \\
with almost contact B-metric structures}

\author{Hristo Manev$^{1,2}$}
\address[1]{Medical University of Plovdiv, Faculty of Pharmacy,
Department of Pharmaceutical Sciences,   15-A Vasil Aprilov
Blvd.,   Plovdiv 4002,   Bulgaria;}  \address[2]{Paisii
Hilendarski University of Plovdiv,   Faculty of Mathematics and
Informatics,   Department of Algebra and Geometry,   236
Bulgaria Blvd.,   Plovdiv 4027,   Bulgaria}
\email{hmanev@uni-plovdiv.bg}

\begin{abstract}
There are considered 4-dimensional pseudo-Riemannian spaces with inner products of signature (3,1) and (2,2). The objects of investigation are space-like and time-like hyperspheres in the respective cases. These hypersurfaces are equipped with almost contact B-metric structures. The constructed manifolds are characterized geometrically.
\end{abstract}

\keywords{almost contact manifold, B-metric, hyperspheres, time-like, space-like}
\subjclass[2010]{Primary 53C15, 53C50; Secondary 53D15}

\maketitle

\section*{Introduction}

The geometry of 4-dimensional Riemannian spaces is well developed. When the metric is generalized to  pseudo-Riemannian there are two significant cases: the Lorentz-Minkowski space $\R^{3,1}$ and the neutral pseudo-Euclidean 4-space $\R^{2,2}$. These spaces are object of special interest because of their importance in physics. The space $\R^{3,1}$ has applications in the general relativity and the space $\R^{2,2}$ is connected to the string theory.

Hyperspheres in an even-dimensional space are known as a fundamental example of almost contact metric manifolds (cf. \cite{Blair}).
We are interested in almost contact B-metric structures, introduced in \cite{GaMiGr}.
In the present work we consider space-like and time-like hyperspheres in $\R^{3,1}$ and $\R^{2,2}$, known also as 3-dimensional de Sitter and anti-de Sitter space-times, respectively (cf. \cite{ChenVeken09}). After that we construct almost contact B-metric manifolds on these hypersurfaces. Then we study some their geometrical properties.

The paper\footnote{This paper is partially supported by a project of the Scientific Research Fund, Plovdiv University, Bulgaria} is organized as follows.
In Sect.~\ref{sect-prel} we recall some preliminary facts about the considered manifolds.
In Sect.~\ref{R3-1} we are interested in space-like spheres in $\R^{3,1}$.
Sect.~\ref{R2-2} is devoted to time-like spheres in $\R^{2,2}$.

\section{Preliminaries}\label{sect-prel}

Let us denote an \emph{almost contact B-metric manifold} by $(M,\f,\xi,\eta,g)$, \ie $M$
is a $(2n+1)$-dimensional differentiable manifold with an almost
contact structure $(\f,\xi,\eta)$ consisting of an endomorphism
$\f$ of the tangent bundle, a Reeb vector field $\xi$, its dual contact  1-form
$\eta$ as well as $M$ is equipped with a pseu\-do-Rie\-mannian
metric $g$  of signature $(n+1,n)$, such that the following
algebraic relations are satisfied \cite{GaMiGr}:
\begin{equation*}\label{strM}
\begin{array}{c}
\f\xi = 0,\qquad \f^2 = -\Id + \eta \otimes \xi,\qquad
\eta\circ\f=0,\qquad \eta(\xi)=1,\\[4pt]
g(\f x, \f y) = - g(x,y) + \eta(x)\eta(y),
\end{array}
\end{equation*}
where $\Id$ is the identity. In the latter equality and further, $x$, $y$, $z$, $w$ will stand for arbitrary elements of $\X(M)$, the Lie algebra of tangent vector fields, or vectors in the tangent space $T_pM$ of $M$ at an arbitrary
point $p$ in $M$.

A classification of almost contact B-metric manifolds, consisting of eleven basic classes $\F_1$, $\F_2$, $\dots$, $\F_{11}$, is given in
\cite{GaMiGr}. This classification is made with respect
to the tensor $F$ of type (0,3) defined by
\begin{equation*}\label{F=nfi}
F(x,y,z)=g\bigl( \left( \nabla_x \f \right)y,z\bigr),
\end{equation*}
where $\n$ is the Levi-Civita connection of $g$.
The following properties are valid in general:
\begin{equation}\label{F-prop}
\begin{array}{l}
F(x,y,z)=F(x,z,y)=F(x,\f y,\f z)+\eta(y)F(x,\xi,z)
+\eta(z)F(x,y,\xi),\\
F(x,\f y, \xi)=(\n_x\eta)y=g(\n_x\xi,y).
\end{array}
\end{equation}

The intersection of the basic classes is the special class $\F_0$,
determined by the condition $F(x,y,z)=0$, and it is known as the
class of the \emph{cosymplectic B-metric manifolds}.

Let $\left\{\xi;e_i\right\}$ $(i=1,2,\dots,2n)$ be a basis of
$T_pM$ and let $\left(g^{ij}\right)$ be the inverse matrix of
$\left(g_{ij}\right)$. Then with $F$ are associated the 1-forms
$\theta$, $\theta^*$, $\omega$, called \emph{Lee forms}, defined
by:
\begin{equation*}\label{t}
\theta(z)=g^{ij}F(e_i,e_j,z),\quad \theta^*(z)=g^{ij}F(e_i,\f
e_j,z), \quad \omega(z)=F(\xi,\xi,z).
\end{equation*}

Now let us consider the case of the lowest dimension of the considered manifolds, \ie $\dim{M}=3$.

We introduce an almost contact B-metric structure $(\f,\xi,\eta,g)$ on $M$ defined by
\begin{equation}\label{str}
\begin{array}{c}
\f e_1=0,\qquad \f e_2=e_{3},\qquad \f e_{3}=- e_2,\qquad \xi=
e_1,\\
\eta(e_1)=1,\qquad \eta(e_2)=\eta(e_{3})=0,
\end{array}
\end{equation}
\begin{equation}\label{gij}
g(e_1,e_1)=g(e_2,e_2)=-g(e_3,e_3)=1,\qquad g(e_i,e_j)=0,\;\; i\neq j \in \{1,2,3\}.
\end{equation}

The components of $F$, $\ta$, $\ta^*$, $\om$ with respect to the \emph{$\f$-basis}
$\left\{e_1,e_2,e_3\right\}$ are denoted by $F_{ijk}=F(e_i,e_j,e_k)$, $\ta_k=\ta(e_k)$, $\ta^*_k=\ta^*(e_k)$, $\om_k=\om(e_k)$.
According to \cite{HM1}, we have:
\[
\begin{array}{lll}
\ta_1=F_{221}-F_{331},\qquad &\ta_2=F_{222}-F_{332},\qquad &\ta_3=F_{223}-F_{322},\\[0pt]
\ta^*_1=F_{231}+F_{321},\qquad &\ta^*_2=F_{223}+F_{322},\qquad &\ta^*_3=F_{222}+F_{332},\\[0pt]
\om_1=0,\qquad &\om_2=F_{112},\qquad &\om_3=F_{113}.
\end{array}
\]

If $F^s$ $(s=1,2,\dots,11)$ are the components of $F$ in the
corresponding basic classes $\F_s$ then: \cite{HM1}
\begin{equation}\label{Fi3}
\begin{array}{l}
F^{1}(x,y,z)=\left(x^2\ta_2-x^3\ta_3\right)\left(y^2z^2+y^3z^3\right),\\[0pt]
\qquad\ta_2=F_{222}=F_{233},\qquad \ta_3=-F_{322}=-F_{333}; \\[0pt]
F^{2}(x,y,z)=F^{3}(x,y,z)=0;
\\[0pt]
F^{4}(x,y,z)=\frac{1}{2}\ta_1\Bigl\{x^2\left(y^1z^2+y^2z^1\right)
-x^3\left(y^1z^3+y^3z^1\right)\bigr\},\\[0pt]
\qquad \frac{1}{2}\ta_1=F_{212}=F_{221}=-F_{313}=-F_{331};\\[0pt]
F^{5}(x,y,z)=\frac{1}{2}\ta^*_1\bigl\{x^2\left(y^1z^3+y^3z^1\right)
+x^3\left(y^1z^2+y^2z^1\right)\bigr\},\\[0pt]
\qquad \frac{1}{2}\ta^*_1=F_{213}=F_{231}=F_{312}=F_{321};\\[0pt]
F^{6}(x,y,z)=F^{7}(x,y,z)=0;\\[0pt]
F^{8}(x,y,z)=\lm\bigl\{x^2\left(y^1z^2+y^2z^1\right)
+x^3\left(y^1z^3+y^3z^1\right)\bigr\},\\[0pt]
\qquad \lm=F_{212}=F_{221}=F_{313}=F_{331};\\[0pt]
F^{9}(x,y,z)=\mu\bigl\{x^2\left(y^1z^3+y^3z^1\right)
-x^3\left(y^1z^2+y^2z^1\right)\bigr\},\\[0pt]
\qquad \mu=F_{213}=F_{231}=-F_{312}=-F_{321};\\[0pt]
F^{10}(x,y,z)=\nu x^1\left(y^2z^2+y^3z^3\right),\qquad
\nu=F_{122}=F_{133};\\[0pt]
F^{11}(x,y,z)=x^1\bigl\{\left(y^2z^1+y^1z^2\right)\om_{2}
+\left(y^3z^1+y^1z^3\right)\om_{3}\bigr\},\\[0pt]
\qquad \om_2=F_{121}=F_{112},\qquad \om_3=F_{131}=F_{113},
\end{array}
\end{equation}
where $x=x^ie_i$, $y=y^je_j$, $z=z^ke_k$.
Obviously, the class of 3-dimensional almost contact B-metric
manifolds is
\[
\F_1 \oplus \F_4 \oplus \F_5 \oplus \F_8 \oplus \F_9 \oplus
\F_{10} \oplus \F_{11}.
\]

In \cite{Man31} are considered three natural connections on an arbitrary $\M$, i.e. linear connections which preserve $\f$, $\xi$, $\eta$, $g$. They are called a $\f$B-connection, a $\f$-canonical connection and a $\f$KT-connection. The $\f$B-connection is defined by
\begin{equation}\label{fiB}
{D}_xy=\n_xy+\frac{1}{2}\bigl\{\left(\n_x\f\right)\f
y+\left(\n_x\eta\right)y\cdot\xi\bigr\}-\eta(y)\n_x\xi.
\end{equation}
The $\f$-canonical connection is determined by an identity for its torsion with respect to the structure tensors and the $\f$KT-connection is characterized as the natural connection with totally antisymmetric torsion.

Since the considered manifold is 3-dimensional and the class $\F_3 \oplus \F_7$ is empty, then the $\f$KT-connection does not exist and the $\f$-canonical connection coincides with the $\f$B-connection.

In \cite{Man31} is defined the square norm of $\nabla \f$ as follows
\begin{equation}\label{snf}
    \norm{\nabla \f}=g^{ij}g^{ks}
    g\bigl(\left(\nabla_{e_i} \f\right)e_k,\left(\nabla_{e_j}
    \f\right)e_s\bigr).
\end{equation}
An almost contact B-metric manifold having a zero square
norm of $\n\f$ is called an
\emph{isotropic-cosymplectic B-metric manifold} (\cite{Man31}).
Obviously, the equality $\norm{\nabla \f}=0$ is valid if $\M$
is a $\F_0$-manifold, but the inverse implication is not always
true.

The Nijenhuis tensor $N$ of the almost contact structure is
defined as usual by $N = [\f, \f]+ \D{\eta}\otimes\xi$,
where $[\f,\f](x, y)=\left[\f x,\f
y\right]+\f^2\left[x,y\right]-\f\left[\f
x,y\right]-\f\left[x,\f y\right]$ for
$\left[x,y\right]=\n_xy-\n_yx$ and $\D\eta$ is the exterior
derivative of $\eta$.
According to \cite{ManIv36}, the
associated Nijenhuis tensor $\widehat{N}$  has the following form
$\widehat{N}=\{\f,\f\}+\left(\LL_{\xi}g\right)\otimes\xi$, where
$\{\f,\f\}(x, y)=\{\f x,\f y\}+\f^2\{x,y\}-\f\{\f
x,y\}-\f\{x,\f  y\}$ for $\left\{x,y\right\}=\n_xy+\n_yx$ and
$\LL_{\xi}g$ is the Lie derivative of $g$ with respect to $\xi$.

The corresponding tensors of type (0,3) on $\M$ are determined by
$N(x,y,z)=g\left(N(x,y),z\right)$ and $\widehat{N}(x,y,z)=g(
\widehat{N}(x,y),z)$. According to \cite{ManIv36}, it is known
that the tensors $N(x,y,z)$ and $\widehat{N}(x,y,z)$ are expressed by $F$ as follows
\begin{equation}\label{N-hatN-F}
\begin{split}
N(x,y,z)&=F(\f x,y,z)-F(x,y,\f z)+\eta(z)F(x,\f y,\xi)\\
& -F(\f y,x,z)+F(y,x,\f z)-\eta(z)F(y,\f x,\xi),\\
\widehat{N}(x,y,z)&=F(\f x,y,z)-F(x,y,\f z)+\eta(z)F(x,\f y,\xi)\\
& +F(\f y,x,z)-F(y,x,\f z)+\eta(z)F(y,\f x,\xi).
\end{split}
\end{equation}

Let $R=\left[\n,\n\right]-\n_{[\ ,\ ]}$ be the
curvature (1,3)-tensor of $\nabla$ and the corresponding curvature
$(0,4)$-tensor be denoted by the same letter: $R(x,y,z,w)$
$=g(R(x,y)z,w)$. The following properties are valid in general:
\begin{equation}\label{R}
\begin{array}{l}
R(x,y,z,w)=-R(y,x,z,w)=-R(x,y,w,z), \\
R(x,y,z,w)+R(y,z,x,w)+R(z,x,y,w)=0.
\end{array}
\end{equation}

The Ricci
tensor $\rho$ and the scalar curvature $\tau$ for $R$ and $g$ as well as
their associated quantities are defined as follows
\begin{equation}\label{rhotau}
\begin{array}{c}
    \rho(y,z)=g^{ij}R(e_i,y,z,e_j),\qquad \rho^*(y,z)=g^{ij}R(e_i,y,z,\f e_j),\\
    \tau=g^{ij}\rho(e_i,e_j),\quad
    \tau^*=g^{ij}\rho^*(e_i,e_j),\quad
    \tau^{**}=g^{ij}\rho^*(e_i,\f e_j).
\end{array}
\end{equation}

Each non-degenerate 2-plane $\al$ in
$T_pM$ with respect to $g$ and $R$ has the following sectional curvature
\begin{equation}\label{sect}
k(\al;p)=\frac{R(x,y,y,x)}{g(x,x)g(y,y)},
\end{equation}
where $\{x,y\}$ is an orthogonal basis of $\al$.

A 2-plane $\al$ is said to be a \emph{$\f$-holomorphic section}
(respectively, a \emph{$\xi$-section}) if $\al= \f\al$
(respectively, $\xi \in \al$).

\section{Space-like hyperspheres in $\R^{3,1}$}\label{R3-1}

In this section we consider a hypersurface of the Lorentz-Minkowski space $\R^{3,1}$. Let $\langle\cdot,\cdot\rangle$ be the Lorentzian inner product, i.e.
\begin{equation*}\label{inprod3-1}
  \langle x,y\rangle=x^1y^1+x^2y^2+x^3y^3-x^4y^4,
\end{equation*}
where $x(x^1,x^2,x^3,x^4)$, $y(y^1,y^2,y^3,y^4)$ are arbitrary vectors in $\R^{3,1}$.
Let us consider a space-like hypersphere $S^3_1$ at the origin with a real radius $r$ identifying the point $p$ in $\R^{3,1}$ with its position vector $z$, i.e.
\[
\langle z,z\rangle=r^2.
\]
It is parameterized by
\begin{equation*}\label{S1}
{z}(r\cos u^1\cos u^2, r\cos u^1\sin u^2, r\sin u^1\cosh u^3,  r\sin u^1\sinh u^3),
\end{equation*}
where $u^1,u^2,u^3$ are real parameters such as $u^1\neq\frac{k\pi}{2} (k \in \Z)$, $u^2 \in [0;2\pi]$.
Then for the local basic vectors $\partial_i=\frac{\partial z}{\partial{u^i}}$ we have the following
\begin{equation*}\label{S1-xx}
\begin{array}{l}
  \langle\partial_1,\partial_1\rangle=r^2, \quad
  \langle\partial_2,\partial_2\rangle=r^2 \cos^2u^1, \quad
  \langle\partial_3,\partial_3\rangle=-r^2 \sin^2u^1, \\
  \langle\partial_i,\partial_j\rangle=0, \; i\neq j.
\end{array}
\end{equation*}

By substituting $e_i=\frac{1}{\sqrt{\left|\langle\partial_i,\partial_i\rangle\right|}}\partial_i$  we obtain a basis $\{e_i\}$, $i \in \{1,2,3\}$ as follows
\begin{equation}\label{S1-eidi}
\begin{array}{l}
{e_1}=\frac{1}{r}\partial_1, \qquad
{e_2}=\frac{\varepsilon_1}{r\cos u^1}\partial_2, \qquad
{e_3}=\frac{\varepsilon_2}{r\sin u^1}\partial_3,
\end{array}
\end{equation}
where $\varepsilon_1=\sgn (\cos u^1)$, $\varepsilon_2=\sgn (\sin u^1)$.
We equip it with an almost contact structure determined as in \eqref{str}.
The metric on the hypersurface, denoted by $g$, is the
restriction of $\langle\cdot,\cdot\rangle$ on the sphere. Then $\{e_1,e_2,e_3\}$ is an orthonormal $\f$-basis on the tangent space $T_pS^3_1$ at $p \in S^3_1$, i.e. for $g_{ij}=g(e_i,e_j)$, $i,j \in \{1,2,3\}$, we have \eqref{gij}.
Thus, we get that $(S^3_1,\f,\xi,\eta,g)$ is a 3-dimensional almost contact B-metric manifold.

By virtue of \eqref{S1-eidi} we obtain the commutators of the basic vectors $e_i$
\begin{equation}\label{S1-com}
\begin{array}{l}
  [e_1,e_2]=\frac{1}{r}\tan u^1 {e_2}, \qquad [e_1,e_3]=-\frac{1}{r}\cot u^1 {e_3}, \qquad [e_2,e_3]=0.
\end{array}
\end{equation}
Using the well-known Koszul identity for $\n$ of $g$ we get
\begin{equation}\label{S1-nij}
\begin{array}{ll}
\n_{e_2}e_1=-\frac{1}{r}\tan u^1 {e_2}, \qquad &\n_{e_2}e_2=\frac{1}{r}\tan u^1 {e_1},\\
\n_{e_3}e_1=\frac{1}{r}\cot u^1 {e_3}, \qquad &\n_{e_3}e_3=\frac{1}{r}\cot u^1 {e_1}
\end{array}
\end{equation}
and the other components are zero.

Let us compute the components of the natural connection denoted by $D$ in \eqref{fiB}.
Then, using \eqref{str}, \eqref{gij}, \eqref{fiB}, \eqref{S1-nij}, we establish that
\begin{equation}\label{S1-Dij}
  D_{e_i}e_j=0, \qquad i,j \in \{1,2,3\}.
\end{equation}

According to \eqref{str}, \eqref{gij} and \eqref{S1-nij}, we obtain the value of the square norm of $\n \f$ as follows
\begin{equation}\label{S1-nf}
\norm{\nabla \f}=-\frac{2}{r^2}(\tan^2 u^1 + \cot^2 u^1).
\end{equation}

Taking into account \eqref{str}, \eqref{gij} and \eqref{S1-nij}, we compute the components $F_{ijk}$ of $F$ with respect to the basis $\{e_1,e_2,e_3\}$. They are
\begin{equation}\label{S1-Fijk}
  F_{213}=F_{231}=-\frac{1}{r}\tan u^1, \qquad  F_{312}=F_{321}=\frac{1}{r}\cot u^1
\end{equation}
and the other components of $F$ are zero.

Using \eqref{N-hatN-F} and \eqref{S1-Fijk}, we find the basic components $N_{ijk}=N(e_i,e_j,e_k)$ and $\widehat{N}_{ijk}=\widehat{N}(e_i,e_j,e_k)$ of the Nijenhuis tensor and its associated tensor, respectively,
\begin{equation*}
\begin{array}{l}
N_{122}=-N_{212}=N_{133}=-N_{313}=-\frac{1}{r}(\cot u^1+\tan u^1), \\
\widehat{N}_{122}=\widehat{N}_{212}=\widehat{N}_{133}=\widehat{N}_{313}=\frac{1}{r}(\cot u^1+\tan u^1), \\
\widehat{N}_{221}=-\widehat{N}_{331}=-\frac{4}{r}\tan u^1,
\end{array}
\end{equation*}
as well as their square norms, according to \eqref{snf}, as follows
\begin{equation}\label{S1-NNhat}
\begin{array}{l}
\norm{N}=\frac{4}{r^2}(\cot^2 u^1+\tan^2 u^1+2), \\
\norm{\widehat{N}}=\frac{4}{r^2}(\cot^2 u^1+9\tan^2 u^1+2).
\end{array}
\end{equation}

Bearing in mind \eqref{Fi3} and \eqref{S1-Fijk}, we establish the equality
\begin{equation}\label{S1-F5+F9}
F(x,y,z)=(F^5+F^9)(x,y,z),
\end{equation}
where $F^5$ and $F^9$ are the components of $F$ in the basic classes $\F_5$ and $\F_9$, respectively. The nonzero components of $F^5$ and $F^9$ with respect to $\{e_1,e_2,e_3\}$ are the following
\begin{equation}\label{S1-Fijk59}
\begin{array}{l}
F^5_{213}=F^5_{231}=F^5_{312}=F^5_{321}=\frac{1}{2}\ta^*_1=\frac{1}{2r}(\cot u^1-\tan u^1),\\
F^9_{213}=F^9_{231}=-F^9_{312}=-F^9_{321}=\mu=-\frac{1}{2r}(\cot u^1+\tan u^1).
\end{array}
\end{equation}
Let us remark that the above components of $F^5$ and $F^9$ are nonzero for all values of $u^1$ in its domain.
By virtue of \eqref{S1-F5+F9}, \eqref{S1-Fijk59} and \eqref{F-prop},  we get that
\begin{equation}\label{S1-d-eta}
d\eta=0, \qquad \n_\xi \xi=0.
\end{equation}

Using \eqref{gij}, \eqref{S1-com} and \eqref{S1-nij}, we compute the components $R_{ijk\ell}=R(e_i,e_j,e_k,e_\ell)$ of the curvature tensor $R$ with respect to $\{e_1,e_2,e_3\}$. The nonzero components are given by the following ones and the symmetries of $R$ in \eqref{R}
\begin{equation}\label{S1-Rijkl}
  R_{1221}=-R_{1331}=-R_{2332}=\frac{1}{r^2}.
\end{equation}

By virtue of \eqref{gij}, \eqref{rhotau} and \eqref{S1-Rijkl}, the basic components $\rho_{jk}=\rho(e_j,e_k)$ and $\rho^*_{jk}=\rho^*(e_j,e_k)$ of the
Ricci tensor $\rho$ and its associated tensor $\rho^*$, respectively, as well as the values
of the scalar curvature $\tau$ and its associated curvatures $\tau^*$, $\tau^{**}$ are
the following
\begin{equation*}\label{S1-rhotau}
\begin{array}{l}
\rho_{11}=\rho_{22}=-\rho_{33}=\frac{2}{r^2}, \quad \rho^{*}_{23}=\rho^{*}_{32}=\frac{1}{r^2},\\
\tau=\frac{6}{r^2}, \qquad \tau^{*}=0, \qquad \tau^{**}=\frac{2}{r^2}.
\end{array}
\end{equation*}
Moreover, using \eqref{gij}, \eqref{sect} and \eqref{S1-Rijkl}, we obtain the basic sectional curvatures $k_{ij}=k(e_i,e_j)$ determined by the basis $\{e_i,e_j\}$ of the corresponding 2-plane as follows
\begin{equation}\label{S1-kij}
k_{12}=k_{13}=k_{23}=\frac{1}{r^2}.
\end{equation}

Let us remark that \eqref{gij}, \eqref{S1-Rijkl} and \eqref{S1-kij} imply the following form of the curvature tensor
\begin{equation}\label{Rpi1}
R(x,y,z,w)=\frac{1}{r^2}\{g(y,z)g(x,w)-g(x,z)g(y,w)\}.
\end{equation}

Bearing in mind the above results, we establish the truthfulness of the following
\begin{thm}
Let $(S^3_1,\f,\xi,\eta,g)$ be the space-like sphere in the Lorentz-Minkowski space $\R^{3,1}$ equipped with an almost contact B-metric structure.
Then
\begin{enumerate}
\item
the manifold is in the class $\F_5\oplus\F_9$ but it belongs neither to $\F_5$ nor $\F_9$ and it is not an isotropic-cosymplectic B-metric manifold;
\item
the $\f$B-connection which coincides with the $\f$-canonical connection vanishes in the basis $\{e_1,e_2,e_3\}$;
\item
the square norm of $\nabla \f$ is negative;
\item
the square norms of the Nijenhuis tensor and its associated are positive;
\item
the contact form $\eta$ is closed and the integral curves of $\xi$ are geodesic;
\item
the manifold is a space-form with positive constant sectional curvature.
\end{enumerate}
\end{thm}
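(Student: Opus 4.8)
The plan is to read off each of the six assertions directly from the explicit computations already performed in the $\f$-basis $\{e_1,e_2,e_3\}$, so that the proof reduces to organizing the established formulas \eqref{S1-Dij}--\eqref{Rpi1} according to the six items. No new computation is required; what remains is to interpret the data geometrically.

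First I would settle the classification in~(1). The decomposition \eqref{S1-F5+F9} together with the component formulas \eqref{S1-Fijk59} shows that $F=F^5+F^9$, placing the manifold in $\F_5\oplus\F_9$. To exclude each summand separately I would observe that the $\F_9$-component $\mu=-\frac{1}{2r}(\cot u^1+\tan u^1)$ never vanishes on the domain, since $\cot u^1+\tan u^1=\frac{1}{\sin u^1\cos u^1}\neq 0$; hence $F\neq F^5$ and the manifold is not in $\F_5$. Likewise the $\F_5$-component $\frac12\ta^*_1=\frac{1}{2r}(\cot u^1-\tan u^1)$ is not identically zero (for instance at $u^1=\frac{\pi}{6}$), so the manifold is not in $\F_9$ either. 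Finally, the value \eqref{S1-nf} gives $\nf\neq 0$, which rules out the isotropic-cosymplectic case.

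Items~(2)--(5) are then immediate. Assertion~(2) is exactly \eqref{S1-Dij}, since the $\f$-canonical connection coincides with the $\f$B-connection in dimension three; for~(3) the formula \eqref{S1-nf} yields $\nf=-\frac{2}{r^2}(\tan^2 u^1+\cot^2 u^1)<0$ because the bracket is a sum of squares; for~(4) the expressions \eqref{S1-NNhat} are sums of squares augmented by the positive constant term $+2$, hence strictly positive; and~(5) is precisely the pair of identities \eqref{S1-d-eta}, namely $d\eta=0$ and $\n_\xi\xi=0$.

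For~(6) I would invoke \eqref{Rpi1}: the curvature tensor has the standard constant-sectional-curvature form $R(x,y,z,w)=\frac{1}{r^2}\{g(y,z)g(x,w)-g(x,z)g(y,w)\}$, and by \eqref{S1-kij} all basic sectional curvatures equal $\frac{1}{r^2}>0$, so the manifold is a pseudo-Riemannian space-form of positive constant sectional curvature. The only genuinely delicate point in this assembly is the non-membership part of~(1)---namely verifying that neither the $\F_5$- nor the $\F_9$-component is identically zero---whereas the substantive labor (deriving \eqref{S1-nij}, \eqref{S1-Fijk}, and especially the curvature \eqref{Rpi1}) has already been discharged above.
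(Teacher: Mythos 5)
Your proposal is correct and takes essentially the same route as the paper: both proofs simply read off the six items from the displayed computations \eqref{S1-Dij}, \eqref{S1-nf}, \eqref{S1-NNhat}, \eqref{S1-d-eta}, \eqref{S1-F5+F9}, \eqref{S1-Fijk59} and \eqref{Rpi1}. Your treatment of the exclusion in item~(1) is in fact slightly more careful than the paper's accompanying remark (which asserts both sets of components in \eqref{S1-Fijk59} are nonzero for all $u^1$, although the $F^5$-components vanish where $\tan^2u^1=1$); your observation that $F^9\neq0$ everywhere while $F^5$ is merely not identically zero is exactly what the argument needs.
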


\begin{proof}
The proposition (1) follows from \eqref{S1-nf}, \eqref{S1-F5+F9} and \eqref{S1-Fijk59}. The truthfulness of the propositions (2), (3), (4), (5), (6) follows from \eqref{S1-Dij}, \eqref{S1-nf}, \eqref{S1-NNhat}, \eqref{S1-d-eta}, \eqref{Rpi1}, respectively.
\end{proof}

\section{Time-like hyperspheres in $\R^{2,2}$}\label{R2-2}
In \cite{GaMiGr}, it is considered a unit time-like hypersphere $S$ in $(\R^{2n+2},J,G)$, where $\R^{2n+2}$ is a complex Riemannian manifold with a canonical complex structure $J$ and a Norden metric $G$. There is introduced an almost contact B-metric structure on $S$ in appropriate way by means of $J$ and $G$. The constructed hypersphere with the considered structure belongs to the class $\F_4 \oplus \F_5$.

In this section we use a different approach for equipping a time-like hypersphere in $\R^{2n+2}$ for $n=1$ with an almost contact B-metric structure.

Let us consider the neutral pseudo-Euclidean 4-space $\R^{2,2}$.
Let $\langle\cdot,\cdot\rangle$ be the inner product defined by
\begin{equation*}\label{inprod2-2}
  \langle x,y\rangle=x^1y^1+x^2y^2-x^3y^3-x^4y^4
\end{equation*}
for arbitrary vectors $x(x^1,x^2,x^3,x^4)$, $y(y^1,y^2,y^3,y^4)$ in $\R^{2,2}$.
Let us consider a time-like hypersphere $H^3_1$ at the origin with a real radius $r$ identifying the point $p$ in $\R^{2,2}$ with its position vector $z$, i.e.
\[
\langle z,z\rangle=-r^2.
\]
It is parameterized by
\begin{equation*}\label{S2}
{z}(r\sinh u^1\cos u^2, r\sinh u^1\sin u^2, r\cosh u^1\cos u^3,  r\cosh u^1\sin u^3),
\end{equation*}
where $u^1,u^2,u^3 \in \R$ such as $u^1\neq0$.
Then, for the local basic vectors $\partial_i$, we have the following
\begin{equation*}\label{S2-xx}
\begin{array}{l}
  \langle\partial_1,\partial_1\rangle=r^2, \quad
  \langle\partial_2,\partial_2\rangle=r^2 \sinh^2 u^1, \quad
  \langle\partial_3,\partial_3\rangle=-r^2 \cosh^2 u^1, \\
  \langle\partial_i,\partial_j\rangle=0, \; i\neq j.
\end{array}
\end{equation*}

Similarly as in the previous section, we substitute $e_i=\frac{1}{\sqrt{\left|\langle\partial_i,\partial_i\rangle\right|}}\partial_i$  and we obtain an orthonormal basis $\{e_i\}$, $i \in \{1,2,3\}$, as follows
\begin{equation*}\label{S2-eidi}
\begin{array}{l}
{e_1}=\frac{1}{r}\partial_1, \qquad
{e_2}=\frac{\varepsilon}{r\sinh u^1}\partial_2, \qquad
{e_3}=\frac{1}{r\cosh u^1}\partial_3,
\end{array}
\end{equation*}
where $\varepsilon=\sgn (u^1)$.
As for $S^3_1$, we introduce an almost contact B-metric structure on $H^3_1$ determined by \eqref{str} and \eqref{gij}.
Hence, we get that $(H^3_1,\f,\xi,\eta,g)$ is a 3-dimensional almost contact B-metric manifold.

By similar way as for $S^3_1$ we obtain successively the following results:
\begin{equation*}\label{S2-com}
\begin{array}{c}
[e_1,e_2]=-\frac{1}{r}\coth u^1 {e_2}, \qquad [e_1,e_3]=-\frac{1}{r}\tanh u^1 {e_3}, \qquad [e_2,e_3]=0,
\end{array}
\end{equation*}
\begin{equation*}\label{S2-nij}
\begin{array}{l}
\n_{e_2}e_1=\frac{1}{r}\coth u^1 {e_2}, \qquad \n_{e_2}e_2=-\frac{1}{r}\coth u^1 {e_1},\\
\n_{e_3}e_1=\frac{1}{r}\tanh u^1 {e_3}, \qquad \n_{e_3}e_3=\frac{1}{r}\tanh u^1 {e_1},
\end{array}
\end{equation*}
\begin{equation}\label{S2-Dij}
  D_{e_i}e_j=0, \qquad i,j \in \{1,2,3\},
\end{equation}
\begin{equation}\label{S2-nf}
\begin{array}{c}
\norm{\nabla \f}=-\frac{2}{r^2}(\tanh^2 u^1 + \coth^2 u^1),
\end{array}
\end{equation}
\begin{equation*}\label{S2-Fijk}
\begin{array}{c}
F_{213}=F_{231}=\frac{1}{r}\coth u^1, \qquad  F_{312}=F_{321}=\frac{1}{r}\tanh u^1,
\end{array}
\end{equation*}
\begin{equation*}
\begin{array}{l}
N_{122}=-N_{212}=N_{133}=-N_{313}=\frac{2}{r\sinh 2u^1}, \\
\widehat{N}_{122}=\widehat{N}_{212}=\widehat{N}_{133}=\widehat{N}_{313}=-\frac{2}{r\sinh 2u^1}, \\
\widehat{N}_{221}=-\widehat{N}_{331}=\frac{2}{r}(\coth u^1 + \tanh u^1),
\end{array}
\end{equation*}
\begin{equation}\label{S2-NNhat}
\begin{array}{l}
\norm{N}=\frac{4}{r^2}(\coth^2 u^1+\tanh^2 u^1+2), \\
\norm{\widehat{N}}=\frac{4}{r^2}(3\coth^2 u^1+3\tanh^2 u^1+2),
\end{array}
\end{equation}
\begin{equation}\label{S2-F5+F9}
F(x,y,z)=(F^5+F^9)(x,y,z),
\end{equation}
\begin{equation}\label{S2-Fijk59}
\begin{array}{l}
F^5_{213}=F^5_{231}=F^5_{312}=F^5_{321}=\frac{1}{2}\ta^*_1=\frac{1}{2r}(\coth u^1+\tanh u^1),\\
F^9_{213}=F^9_{231}=-F^9_{312}=-F^9_{321}=\mu=\frac{1}{2r}(\coth u^1-\tanh u^1),
\end{array}
\end{equation}
\begin{equation}\label{S2-d-eta}
d\eta=0, \qquad \n_\xi \xi=0,
\end{equation}
\begin{equation}\label{S2-kij}
\begin{array}{c}
R_{1221}=-R_{1331}=-R_{2332}=k_{12}=k_{13}=k_{23}=-\frac{1}{r^2},
\end{array}
\end{equation}
\begin{equation*}\label{S2-rhotau}
\begin{array}{l}
\rho_{11}=\rho_{22}=-\rho_{33}=-\frac{2}{r^2}, \qquad \rho^{*}_{23}=\rho^{*}_{32}=-\frac{1}{r^2},\\
\tau=-\frac{6}{r^2}, \qquad \tau^{*}=0, \qquad \tau^{**}=-\frac{2}{r^2}.
\end{array}
\end{equation*}

Similarly to the case of $S^3_1$, the obtained results could be interpreted in the following
\begin{thm}
Let $(H^3_1,\f,\xi,\eta,g)$ be the time-like sphere in the space $\R^{2,2}$ equipped with an almost contact B-metric structure.
Then
\begin{enumerate}
\item
the manifold is in the class $\F_5\oplus\F_9$ but it belongs neither to $\F_5$ nor $\F_9$ and it is not an isotropic-cosymplectic B-metric manifold;
\item
the $\f$B-connection which coincides with the $\f$-canonical connection vanishes in the basis $\{e_1,e_2,e_3\}$;
\item
the square norm of $\nabla \f$ is negative;
\item
the square norms of the Nijenhuis tensor and its associated are positive;
\item
the contact form $\eta$ is closed and the integral curves of $\xi$ are geodesic;
\item
the manifold is a space-form with negative constant sectional curvature.
\end{enumerate}
\end{thm}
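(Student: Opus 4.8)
The plan is to mirror the proof for $S^3_1$: every structural and curvature quantity of $(H^3_1,\f,\xi,\eta,g)$ has already been expressed in the $\f$-basis $\{e_1,e_2,e_3\}$ in \eqref{S2-Dij}--\eqref{S2-kij}, so the proof reduces to extracting each of the six assertions from that data, with no new computation of connection coefficients or curvature components required. The only points needing more than a glance are a nonvanishing check in (1) and the reconstruction of the full curvature tensor in (6).

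For assertion (1), the decomposition \eqref{S2-F5+F9} gives $F=F^5+F^9$, so the manifold lies in $\F_5\oplus\F_9$. To rule out membership in either summand I use \eqref{S2-Fijk59}: the $\F_5$-component is controlled by $\tfrac{1}{2}\ta^*_1=\tfrac{1}{2r}(\coth u^1+\tanh u^1)$ and the $\F_9$-component by $\mu=\tfrac{1}{2r}(\coth u^1-\tanh u^1)$. Here the key observation is that both are nonzero throughout the admissible range $u^1\neq 0$: since $\coth u^1$ and $\tanh u^1$ share the sign of $u^1$ their sum cannot vanish, while $\coth u^1=\tanh u^1$ would force $\cosh^2 u^1=\sinh^2 u^1$, contradicting $\cosh^2 u^1-\sinh^2 u^1=1$. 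Thus $F^9\neq 0$ and $F^5\neq 0$, so the manifold is neither in $\F_5$ nor in $\F_9$; and \eqref{S2-nf} shows $\norm{\n\f}\neq 0$, so it is not isotropic-cosymplectic. Assertions (2)--(5) are then immediate readings: (2) is \eqref{S2-Dij}; (3) follows since the factor $\tanh^2 u^1+\coth^2 u^1>0$ makes $\norm{\n\f}<0$ in \eqref{S2-nf}; (4) follows since both norms in \eqref{S2-NNhat} carry the strictly positive factor $\tfrac{4}{r^2}(\,\cdots+2)$; and (5) is exactly \eqref{S2-d-eta}, where $d\eta=0$ means $\eta$ is closed and $\n_\xi\xi=0$ means the $\xi$-integral curves are geodesics.

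The one step I expect to require genuine care is assertion (6). From \eqref{S2-kij} the independent nonzero curvature components are $R_{1221}=-R_{1331}=-R_{2332}=-\tfrac{1}{r^2}$, and the three basic sectional curvatures all equal $-\tfrac{1}{r^2}$. Reading off constancy on the coordinate $2$-planes is not by itself enough to conclude a space-form, so --- exactly as \eqref{Rpi1} does for $S^3_1$ --- I would verify, using \eqref{gij} and the symmetries \eqref{R}, that these sparse components assemble into
\[
R(x,y,z,w)=-\frac{1}{r^2}\bigl\{g(y,z)g(x,w)-g(x,z)g(y,w)\bigr\}.
\]
This is the curvature tensor of a pseudo-Riemannian space-form of constant sectional curvature $-\tfrac{1}{r^2}<0$, which proves (6). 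The obstacle is thus purely one of bookkeeping: confirming that the short list of nonzero $R_{ijk\ell}$ is consistent with the fully symmetric right-hand side above, and not merely with agreement on the three coordinate sections.
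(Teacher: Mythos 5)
Your proof is correct and takes essentially the same approach as the paper: each assertion is read off from the precomputed quantities \eqref{S2-Dij}--\eqref{S2-kij}, exactly as the paper's own one-line proof does by citing \eqref{S2-nf}, \eqref{S2-F5+F9}, \eqref{S2-Fijk59}, \eqref{S2-Dij}, \eqref{S2-NNhat}, \eqref{S2-d-eta} and \eqref{S2-kij}. Your two extra checks --- the nonvanishing of $\tfrac{1}{2}\ta^*_1$ and $\mu$ on the domain $u^1\neq 0$, and the assembly of the sparse components in \eqref{S2-kij} into the constant-curvature form $R(x,y,z,w)=-\tfrac{1}{r^2}\{g(y,z)g(x,w)-g(x,z)g(y,w)\}$ analogous to \eqref{Rpi1} --- simply make explicit what the paper leaves implicit.
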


\begin{proof}
The proposition (1) follows from \eqref{S2-nf}, \eqref{S2-F5+F9} and \eqref{S2-Fijk59}. The truthfulness of the propositions (2), (3), (4), (5), (6) follows from \eqref{S2-Dij}, \eqref{S2-nf}, \eqref{S2-NNhat}, \eqref{S2-d-eta}, \eqref{S2-kij}, respectively.
\end{proof}

\end{document}